\theoremstyle{plain}
\newtheorem{theo}{Theorem}[section]
\theoremstyle{definition}
\theoremstyle{plain}
\newtheorem{prop}[theo]{Proposition}
\theoremstyle{plain}
\theoremstyle{plain}
\newtheorem{coro}[theo]{Corollary}
\theoremstyle{definition}
\theoremstyle{definition}
\newtheorem{remark}[theo]{Remark}
\begin{document} 

\title{Mal'cev-Neumann Rings and Noncrossed Product Division Algebras}
\author{C\'ecile Coyette}
\address{Universit\'e catholique de Louvain, Chemin du cyclotron, 2, 1348 Louvain-la-Neuve, Belgium}
\email{cecile.coyette@uclouvain.be}
\keywords{Malcev-Neumann series, division algebra, (non)crossed product, local invariants}

\thanks{The author is a Research Fellow of the ``Fonds National pour la Recherche Scientifique" (F.R.S.-FNRS), Belgium}

\begin{abstract}
The first section of this paper yields a sufficient condition for a Mal'cev-Neumann ring of formal series to be a noncrossed product division algebra.
This result is used in Section 2 to give an elementary proof of the existence of noncrossed product division algebras (of degree $8$ or degree $p^2$ for $p$ any odd prime). The arguments are based on those of Hanke in [7], [8] and~[9].
\end{abstract}

\maketitle

A finite-dimensional central simple algebra $A$ over a field $F$ is a crossed product if it contains a maximal commutative subalgebra $K$ that is a Galois field extension of $F$. 
Multiplication of the elements in a $K$-base of $A$ can be defined in terms of a factor set, which yields a nice explicit representation of $A$, see [13, \S14.1]. 
In 1972, S.~A.~Amitsur proved by a generic construction the existence of noncrossed product division algebras [2]. Since then, various examples of noncrossed product division algebras have been constructed [3],[4], including some by E. S. Brussel~[3] over rational function fields or Laurent series fields in one indeterminate over $\mathbb{Q}$. 
More recently, very explicit, computational examples over Laurent series fields were given by T.~Hanke [7], [8] and~[9]. 

The purpose of this article is to provide a new proof of the existence of noncrossed product division algebras of degree $8$ or $p^2$ for $p$ an odd prime. The proof is directly inspired by T. Hanke's explicit constructions. For Hanke's delicate definition of outer automorphisms of division algebras in [7], [8] and [9], we substitute an argument using local invariants of division algebras over global fields. This is made possible by the use of a version of the Mal'cev-Neumann construction. Thus we obtain an elementary but not completely explicit construction of noncrossed product division algebras.

In the first part of this article, we provide a condition for a Mal'cev-Neumann ring to be a division algebra and some sufficient conditions for this Mal'cev-Neumann division algebra to be a noncrossed product, see Proposition \ref{lemme} and Corollary \ref{crit}. The residue field condition used in Proposition \ref{lemme} was first introduced by E. S. Brussel [3].

In the second part, using Corollary \ref{crit}, we obtain two distinct constructions of noncrossed product division algebras of degree $p^2$ (for $p$ an odd prime) and of degree $8$ (respectively Theorem \ref{theo1} and \ref{cas2}). Theorem \ref{theo1} is based on Lemma 7.6 in [7] and [9] and Theorem \ref{cas2} is a generalisation of Theorem 6.2  in [8].


As an illustration of this technique, we give a infinite series of examples based on number field extensions of characteristic $0$ in \S 2.1 and on extensions of rational function fields in one indeterminate in \S 2.2 (these last examples in prime characteristic differ from~[8] where T. Hanke use number fields).

\vspace{3mm}

\section{Malcev-Neumann series}

\vspace{3mm}

Let $A$ be a finite-dimensional central simple $F$-algebra, where $F$ is a field. Assume that the algebra $A$ contains a Galois extension $K/F$ whose Galois group $G := Gal[K/F]$ is abelian and let $C_{A}(K)$ be the centralizer of $K$ in $A$.
Moreover, let $\varepsilon : \Gamma \rightarrow G$ be a surjective homomorphism, where $\Gamma$ is a totally ordered abelian (additive) group.

\vspace{2mm}

Using the Skolem-Noether theorem [13, \S12.6], we know that for all $\sigma \in G$, there is $u_{\sigma} \in A^{\times}$ such that
$x^{\sigma} = u_{\sigma}^{-1}xu_{\sigma}$ for all $x \in K$ (to simplify, we take $u_{\rm Id}=1$).

\vspace{2mm}

From this, we construct the \emph{Malcev-Neumann ring} $C_{A}(K)((\Gamma))$ of formal series in indeterminates $a_{\gamma}$, 
\begin{align*}
&\sum_{\gamma \in \Gamma} a_{\gamma}r_{\gamma} && \text{with $r_{\gamma}\in C_{A}(K)$ for all $\gamma \in \Gamma$,}\\
\intertext{\noindent
such that the set ${\rm supp}\big(\sum_{\gamma \in \Gamma}a_{\gamma}r_{\gamma}\big):=\{\gamma \in \Gamma \, \vert \, r_{\gamma}\neq 0\},$
called the \emph{support} of the series, is well-ordered.
The ring structure is given by componentwise addition and by a multiplication defined as follows: }
ra_{\gamma}&=a_{\gamma}(u_{\varepsilon(\gamma)}^{-1}ru_{\varepsilon(\gamma)}) &&\text{for all  } r \in C_{A}(K), \\
a_{\gamma}a_{\delta}&=a_{\gamma + \delta}u_{\varepsilon(\gamma)\varepsilon(\delta)}^{-1}u_{\varepsilon(\gamma)}u_{\varepsilon(\delta)} &&
\text{for all $\gamma$, $\delta \in \Gamma$.}
\end{align*}

The fact that the support is well-ordered implies that the multiplication is well-defined (see [11, p. 241-244] and [5, \S2.4] for more details). It is easy to prove that the multiplication is associative.
If $L \subseteq C_{A}(K)$ and if $\Lambda$ is a subset of $\Gamma$, the notation $L((\Lambda))$ will refer to the subset of $C_{A}(K)((\Gamma))$ given by the elements 
$\sum_{\gamma \in \Lambda} a_{\gamma} r_{\gamma}$ where $r_{\gamma} \in L$ for all $\gamma \in \Lambda$.

\vspace{2mm}

\begin{remark}\label{rem}
Suppose $\Gamma = \mathbb{Z}$ and let $K/F$ be a cyclic Galois extension with Galois group $G$ generated by $\sigma$. Therefore, for all $\tau \in G$, we have that $\tau=\sigma^{k}$ for some $k\in \mathbb{Z}$. Let $A$ be a finite-dimensional central simple $F$-algebra containing $K$. If we take $u_{\sigma^{z}}=u_{\sigma}^{z}$ and $\varepsilon : \mathbb{Z}\rightarrow G$ defined by $\varepsilon(z)=\sigma^{z}$ for $z \in \mathbb{Z}$ in the above construction, we have $a_{\gamma}a_{\delta}=a_{\gamma+\delta}$ and $ra_{\gamma}= a_{\gamma}r^{\sigma^{\gamma}}$for all $\gamma, \delta \in \mathbb{Z}$.
If we identify $a_{1}$ with $x$, and thus $a_{z}$ with $x^{z}$ for all $z\in \mathbb{Z}$, we obtain a twisted Laurent series ring whose elements are $\sum_{z \in \mathbb{Z}}x^{z}r_{z}$ with $r_{z} \in C_{A}(K)$.
\end{remark}

\vspace{2mm}

To simplify the notation, we set $\mathcal{D}:=C_{A}(K)((\Gamma))$.

\vspace{2mm}

\begin{theo}\label{DivAl}
If $C_{A}(K)$ is a division algebra, then $\mathcal{D}$ is a valued division algebra with value group $\Gamma$ and residue field $C_A(K)$. Its center is $F((\ker \varepsilon))$ and $\deg \mathcal{D} = \deg A$.
\end{theo}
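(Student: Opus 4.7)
The plan is to introduce the natural Mal'cev-Neumann valuation $v\colon \mathcal{D}^{\times}\to \Gamma$ defined by $v(f) = \min\mathrm{supp}(f)$, which is well-posed because supports are well-ordered. Verifying that $v$ is a valuation reduces to showing that leading terms of a product never cancel. If $f = a_{\gamma_{0}}r_{\gamma_{0}} + \cdots$ and $g = a_{\delta_{0}}s_{\delta_{0}} + \cdots$ are the leading-term decompositions, then a direct unfolding of the multiplication rules yields
\[
fg = a_{\gamma_{0}+\delta_{0}}\bigl(u_{\varepsilon(\gamma_{0})\varepsilon(\delta_{0})}^{-1} u_{\varepsilon(\gamma_{0})}u_{\varepsilon(\delta_{0})}\bigr)\bigl(u_{\varepsilon(\delta_{0})}^{-1} r_{\gamma_{0}} u_{\varepsilon(\delta_{0})}\bigr)s_{\delta_{0}} + \cdots.
\]
Each of the three factors in parentheses is a unit in $C_{A}(K)$: the first is the associated factor-set value, which acts trivially on $K$; the second is a conjugate of $r_{\gamma_{0}}$ by an element normalizing $K$, hence lies in $C_{A}(K)^{\times}$; the third is just $s_{\delta_{0}}$. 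Since $C_{A}(K)$ is a division ring, the leading coefficient is nonzero, so $v(fg) = v(f) + v(g)$, and the ultrametric inequality is immediate.

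Next I would show that $\mathcal{D}$ is a division ring by inverting any nonzero $f$. Factoring out the leading term, one writes $f = a_{\gamma}\,r\,(1+h)$ with $r \in C_{A}(K)^{\times}$ and $v(h) > 0$; then the geometric series $(1+h)^{-1} = \sum_{n\geq 0}(-h)^{n}$ converges in $\mathcal{D}$ by the standard Mal'cev-Neumann fact that the sub-semigroup generated by a well-ordered positive set is well-ordered and each element is reached only finitely often (this is the content of the results cited from [11], [5]). Invertibility of $a_{\gamma}$ follows from $a_{\gamma}a_{-\gamma} = u_{\varepsilon(\gamma)}u_{\varepsilon(\gamma)^{-1}} \in C_{A}(K)^{\times}$, so $f$ is invertible. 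This yields the valuation $v$ with value group $\Gamma$ (witnessed by $v(a_{\gamma}) = \gamma$) and residue ring $C_{A}(K)$ via the constant-coefficient map.

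For the center, I would proceed in three stages. First, $F((\ker\varepsilon)) \subseteq Z(\mathcal{D})$ because for $\gamma \in \ker\varepsilon$ one has $u_{\varepsilon(\gamma)} = 1$, so $a_{\gamma}$ commutes with every $r \in C_{A}(K)$ and with every $a_{\delta}$. Conversely, suppose $f = \sum a_{\gamma}r_{\gamma}$ is central. Commuting with each $x \in K$ gives $r_{\gamma}\,x = x^{\varepsilon(\gamma)}\,r_{\gamma}$ termwise; but $r_{\gamma} \in C_{A}(K)$ centralizes $K$, forcing $\varepsilon(\gamma) = \mathrm{Id}$ whenever $r_{\gamma}\neq 0$, i.e.\ $\mathrm{supp}(f)\subseteq \ker\varepsilon$. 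Commuting with $C_{A}(K)$ then forces $r_{\gamma} \in Z(C_{A}(K)) = K$ by the double centralizer theorem, and commuting with each $a_{\delta}$ forces $r_{\gamma}^{\varepsilon(\delta)} = r_{\gamma}$ for all $\delta$, hence $r_{\gamma} \in K^{G} = F$.

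Finally, choosing coset representatives $\{\gamma_{g}\}_{g \in G}$ of $\ker\varepsilon$ in $\Gamma$ yields a decomposition $\mathcal{D} = \bigoplus_{g \in G} a_{\gamma_{g}}\,C_{A}(K)((\ker\varepsilon))$ as a left $C_{A}(K)((\ker\varepsilon))$-module, whence
\[
\dim_{F((\ker\varepsilon))}\mathcal{D} \;=\; |G|\cdot\dim_{F}C_{A}(K) \;=\; [K:F]\cdot\frac{(\deg A)^{2}}{[K:F]} \;=\; (\deg A)^{2},
\]
so $\deg \mathcal{D} = \deg A$. The main technical hurdle is the non-cancellation of leading coefficients in the first step; everything else amounts to careful bookkeeping, with the one subtle input being the double centralizer theorem to identify $Z(C_{A}(K))$ with $K$.
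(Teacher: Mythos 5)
Your proposal is correct and follows the same overall architecture as the paper: the valuation $v=\min\operatorname{supp}$, non-cancellation of leading coefficients because $C_{A}(K)$ is a division ring, the same three commutation tests (against $K$, against $C_{A}(K)$ using the double centralizer theorem, against the $a_{\delta}$) to identify the center with $F((\ker\varepsilon))$, and the coset/centralizer dimension count. The one genuine divergence is how invertibility is obtained. You invert a nonzero series directly, writing $f=a_{\gamma}r(1+h)$ with $v(h)>0$ and summing the geometric series via Neumann's lemma; the paper avoids this analytic input entirely: having already computed $Z(\mathcal{D})=F((\ker\varepsilon))$ and $[\mathcal{D}:Z(\mathcal{D})]=[A:F]<\infty$, it observes that a finite-dimensional algebra without zero divisors over a field is a division algebra. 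Your route is more robust (it would survive without finite-dimensionality) but leans on the convergence lemma you cite from [11] and [5]; the paper's route is shorter given that the dimension count is needed anyway to get $\deg\mathcal{D}=\deg A$. Two cosmetic points: your leading coefficient $\bigl(u_{\varepsilon(\gamma_{0})\varepsilon(\delta_{0})}^{-1}u_{\varepsilon(\gamma_{0})}u_{\varepsilon(\delta_{0})}\bigr)\bigl(u_{\varepsilon(\delta_{0})}^{-1}r_{\gamma_{0}}u_{\varepsilon(\delta_{0})}\bigr)s_{\delta_{0}}$ simplifies to the paper's $u_{\varepsilon(\gamma_{0})\varepsilon(\delta_{0})}^{-1}u_{\varepsilon(\gamma_{0})}r_{\gamma_{0}}u_{\varepsilon(\delta_{0})}s_{\delta_{0}}$, and the decomposition $\bigoplus_{g}a_{\gamma_{g}}\,C_{A}(K)((\ker\varepsilon))$ is most naturally one of \emph{right} modules since coefficients are written on the right, though the dimension count is unaffected.
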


\begin{proof}
Firstly, we prove that the center $Z(\mathcal{D})$ of $\mathcal{D}$ is $F((\ker \varepsilon))$.
Let $S:= \sum_{\gamma\in \Gamma} a_{\gamma}r_{\gamma}$ be an element of $Z(\mathcal{D})$. From the fact that $rS=Sr$ for all $r \in C_{A}(K)$, we conclude that 
$ r u_{\varepsilon(\gamma)}r_{\gamma} = u_{\varepsilon(\gamma)} r_{\gamma} r$, for all 
$r \in C_{A}(K)$ and for all $\gamma \in \Gamma$. Thus, for all $\gamma \in \Gamma$, $u_{\varepsilon(\gamma)} r_{\gamma}$ is an element of $C_{A}(C_{A}(K))=K$.
In a second step, we know that for all $\beta \in \Gamma$, we must have $Sa_{\beta} = a_{\beta}S$. Using the commutativity of $\Gamma$ and~$G$, this condition amounts to $u_{\varepsilon(\beta)}^{-1}u_{\varepsilon(\gamma)}r_{\gamma}u_{\varepsilon(\beta)}=u_{\varepsilon(\gamma)}r_{\gamma}$ for all $\beta, \gamma \in \Gamma$.
Since $u_{\varepsilon(\gamma)}r_{\gamma} \in K$ and since the homomorphism $\varepsilon$ is surjective, $u_{\varepsilon(\gamma)}r_{\gamma}$ is fixed by all the elements of $G$, thus 
$u_{\varepsilon(\gamma)}r_{\gamma} \in F$ for all $\gamma \in \Gamma$. 
In particular, every element $k$ of $K$ commutes with $u_{\varepsilon(\gamma)}r_{\gamma}$. Therefore, we obtain that for all $\gamma \in \Gamma$ and for all $k \in K$, $k^{\varepsilon(\gamma)} r_{\gamma} = kr_{\gamma}$. Fix $\gamma \in \Gamma$.
If $\varepsilon(\gamma) \neq {\rm Id}$, there exists an element $k$ of~$K^{\times}$ such that $k^{\varepsilon(\gamma)}  \neq k$, and this implies that $r_{\gamma} =0$.
We conclude that if $r_{\gamma} \neq 0$ then $\varepsilon(\gamma) = {\rm Id}$.
From these conditions, we find that $Z(\mathcal{D})=F((\ker \varepsilon))$.

\vspace{2mm}

Secondly, in order to prove that $\deg \mathcal{D} = \deg A$, we start with the equality 
$[\mathcal{D} : Z(\mathcal{D})] = (\Gamma : \ker \varepsilon) [C_{A}(K):F].$ 
The homomorphism $\varepsilon$ is surjective, hence 
$(\Gamma : \ker \varepsilon)= \vert G \vert = [K : F]$. From the theory of central 
simple $F$-algebras, we have $[C_{A}(K) : F][K : F]=[A : F]$ (see [13, \S12.7]). Consequently, we obtain that $[\mathcal{D} : Z(\mathcal{D})] = [A : F]$ as desired.

\vspace{2mm}

To prove that $\mathcal{D}$ is a division algebra, we introduce a map $v : \mathcal{D} \rightarrow \Gamma \cup \{ \infty \}$ 
defined by $v(0)= \infty$ and
$$v\big(\sum_{\gamma \in \Gamma}a_{\gamma}r_{\gamma}\big)=\min \bigg\{{\rm supp}\big(\sum_{\gamma \in \Gamma}a_{\gamma}r_{\gamma}\big)\bigg\},$$
This map satisfies the following three properties:
\begin{enumerate}
\item[{$\bullet$}] For $S \in \mathcal{D}$, $v(S)= \infty$ if and only if $S=0$.
\item[{$\bullet$}] If $S_{1}, S_{2} \in \mathcal{D}$, then $v(S_{1}+S_{2})\geq \min \{v(S_{1}), v(S_{2})\}$.
\item[{$\bullet$}] For $S_{1}, S_{2} \in \mathcal{D}$, $v(S_{1}S_{2})=v(S_{1})+v(S_{2})$.
\end{enumerate}

\noindent The proofs of the first two properties are obvious. For the third one, consider two elements $S_{1}:=\sum_{\gamma \in \Gamma} a_{\gamma}r_{\gamma}$ and $S_{2}:=\sum_{\sigma \in \Gamma} a_{\sigma}s_{\sigma}$ in $\mathcal{D}$. We easily see that we have $v(S_{1}S_{2}) \geqslant v(S_{1})+v(S_{2})$. For the reverse inequality, consider the coefficient of $a_{v(S_1)+v(S_{2})}$ in $S_{1}S_{2}$, which is $\alpha:=u_{\varepsilon(v(S_{1}))\varepsilon(v(S_{2}))}^{-1}u_{\varepsilon(v(S_{1}))} r_{v(S_{1})} u_{\varepsilon(v(S_{2}))}s_{v(S_2)}$. Since $r_{v(S_{1})}\neq 0$ and $s_{v(S_2)} \neq 0$, the fact that $C_{A}(K)$ is a division algebra implies that $\alpha \neq 0$, thus $v(S_{1}S_{2}) \leqslant v(S_1)+v(S_2)$. This proves the third property.

\vspace{2mm}

\noindent Therefore, $\mathcal{D}$ has no zero divisors, and since $\mathcal{D}$ is finite-dimensional, each $S \in \mathcal{D}^{\times}$ has an inverse. This concludes the proof: the algebra $\mathcal{D}$ is a division algebra and $v$ a valuation. The residue field of $\mathcal{D}$ for $v$ is $C_A(K)$ and its value group is $\Gamma$.
\end{proof}

\vspace{2mm}

\begin{remark}
In this text, when we use a valuation on a Malcev-Neumann division algebra, it will always be this valuation $v$. The residue field of $\mathcal{D}$ for this valuation is~$C_A(K)$. Therefore, for any subfield $L \subseteq \mathcal{D}$, the residue field~$\overline{L}$ centralizes $K$, and we may consider the field compositum $\overline{L}K \subseteq C_{A}(K)$.
\end{remark}

\vspace{3mm}

The first part of the following lemma on the Mal'cev-Neumann ring is a special case of Corollary 5.16 in [7] and a generalization of Lemma~5.3 in [8] (where $\Gamma=\mathbb{Z}$) and  of Lemma~2 in [3]. Another proof can be found in [10], Theorem 5.15b.

\vspace{2mm}

\begin{prop}\label{lemme}
If $C_{A}(K)$ is a division algebra and if the field $L$ is a maximal subfield of $\mathcal{D}$ then $\overline{L}K$ is a maximal subfield of $C_{A}(K)$. Moreover, if $L/ F((\ker \varepsilon))$ is a Galois extension and if ${\rm char}\,F$ does not divide $[\overline{L} : F]$, $\overline{L}K$ is Galois over $F$.
\end{prop}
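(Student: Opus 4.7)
The plan is to restrict the valuation $v$ of Theorem~\ref{DivAl} to $L$ and to match the invariants of $L/Z(\mathcal{D})$ with those of $\overline{L}K\subseteq C_A(K)$. Let $\Gamma_L := v(L^{\times})$ and $\overline{L}\subseteq\overline{\mathcal{D}}=C_A(K)$ be the value group and residue field of $L$. Since $Z(\mathcal{D})=F((\ker\varepsilon))\subseteq L$ one has $\ker\varepsilon\subseteq\Gamma_L$, so the subgroup $H := \varepsilon(\Gamma_L)$ of $G$ has order $(\Gamma_L:\ker\varepsilon)$. Because $K=Z(C_A(K))$, every element of $\overline{L}$ commutes with $K$, so $\overline{L}K$ is a commutative subring of the division algebra $C_A(K)$, hence a subfield, and the double centralizer theorem gives $[\overline{L}K:K]\leq\deg C_A(K)=\deg A/[K:F]$.

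The crux is the identification $\overline{L}\cap K = K^H$. For the inclusion $\overline{L}\cap K\subseteq K^H$, given $k\in\overline{L}\cap K$ I lift to $\ell_0\in L$ of valuation $0$ with residue $k$, and for each $\gamma\in\Gamma_L$ pick $\ell_\gamma\in L$ of valuation $\gamma$ with leading term $a_\gamma r_\gamma$ (where $r_\gamma\in C_A(K)^{\times}$); comparing the leading coefficients of $\ell_0\ell_\gamma$ and $\ell_\gamma\ell_0$ by means of the relation $ka_\gamma=a_\gamma k^{\varepsilon(\gamma)}$ and the centrality of $k$ in $C_A(K)$ forces $k^{\varepsilon(\gamma)}=k$, giving $k\in K^H$. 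For the reverse inclusion $K^H\subseteq\overline{L}$, every $k\in K^H$ commutes with the sub-Mal'cev--Neumann ring $C_A(K)((\varepsilon^{-1}(H)))$ of $\mathcal{D}$; together with the maximality $C_\mathcal{D}(L)=L$, this forces each $k\in K^H$ to be the residue of some valuation-$0$ element of $L$. Combining the two inclusions with the fundamental equality $[L:Z(\mathcal{D})]=[\overline{L}:F]\cdot|H|$ (available here since $Z(\mathcal{D})$ is spherically complete and the induced valuation on $\mathcal{D}$ is henselian) yields
\[
[\overline{L}K:K] = [\overline{L}:K^H] = \frac{[\overline{L}:F]\cdot|H|}{[K:F]} = \frac{\deg A}{[K:F]} = \deg C_A(K),
\]
proving that $\overline{L}K$ is a maximal subfield of $C_A(K)$.

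For the Galois assertion, any $Z(\mathcal{D})$-automorphism of $L$ respects the unique extension of $v$ to $L$ and so induces an $F$-automorphism of $\overline{L}$; comparing residues of conjugates shows $\overline{L}/F$ is normal, and the hypothesis $\mathrm{char}\,F\nmid[\overline{L}:F]$ guarantees separability, making $\overline{L}/F$ Galois. Since $K/F$ is Galois and both $K,\overline{L}\subseteq C_A(K)$, the compositum $\overline{L}K/F$ is Galois. The principal obstacle throughout is the inclusion $K^H\subseteq\overline{L}$: $K^H$ need not be contained in $L$ itself, so producing the required valuation-$0$ approximations in $L$ uses the specific multiplicative structure of the Mal'cev--Neumann construction in an essential way, and a careful use of the maximality of $L$ to rule out proper enlargement.
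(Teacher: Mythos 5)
Your overall strategy is close in spirit to the paper's (restrict the valuation to $L$, compare value group and residue field data), but the step you yourself single out as the crux --- the inclusion $K^{H}\subseteq\overline{L}$ --- is a genuine gap, and the argument you sketch for it does not work. You argue that any $k\in K^{H}$ commutes with the subring $C_{A}(K)((\varepsilon^{-1}(H)))$ and then invoke the maximality $C_{\mathcal{D}}(L)=L$; but $L$ need not be contained in $C_{A}(K)((\varepsilon^{-1}(H)))$: only the leading exponents $v(x)$ of elements $x\in L$ are known to lie in $\Gamma_{L}\subseteq\varepsilon^{-1}(H)$, while the higher terms of their supports are unconstrained, so commuting with that subring says nothing about commuting with $L$, and no mechanism is provided that actually produces a valuation-$0$ element of $L$ with residue $k$. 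As written, the equality $[\overline{L}K:K]=[\overline{L}:K^{H}]$ is therefore unjustified.

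Fortunately the reverse inclusion is not needed. From the inclusion you did establish, $\overline{L}\cap K\subseteq K^{H}$, you get $[\overline{L}\cap K:F]\leqslant[K^{H}:F]$, hence $[\overline{L}K:K]=[\overline{L}:\overline{L}\cap K]\geqslant[\overline{L}:F]\cdot\vert H\vert/[K:F]$; your fundamental equality $[L:Z(\mathcal{D})]=[\overline{L}:F]\cdot\vert H\vert$ (which genuinely requires, and you should justify, that $F((\ker\varepsilon))$ is maximally complete so that $L/Z(\mathcal{D})$ is defectless) then gives $[\overline{L}K:K]\geqslant\deg C_{A}(K)$, and the trivial upper bound forces equality --- with $\overline{L}\cap K=K^{H}$ falling out a posteriori rather than being an input. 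Note that the paper avoids defectlessness altogether by applying the fundamental \emph{inequality} to the top extension $\mathcal{D}/L$ rather than to $L/Z(\mathcal{D})$: from $(v(\mathcal{D}^{\times}):v(L^{\times}))\,[C_{A}(K):\overline{L}]\leqslant[\mathcal{D}:L]$ together with $[\overline{L}K:\overline{L}]=[K:\overline{L}\cap K]\geqslant\vert H\vert$ one gets $[C_{A}(K):\overline{L}K]\leqslant\deg C_{A}(K)$ directly, with all inequalities pointing the right way; this is the cleaner route. Your treatment of the Galois assertion matches the paper's and is fine.
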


\begin{proof}
Since $\overline{L}K$ is a subfield of $C_{A}(K)$, it follows that $\deg C_{A}(K) \leqslant [C_{A}(K) :\overline{L}K].$
Consequently, we have to prove that $[C_{A}(K) : \overline{L}K] \leqslant \deg C_{A}(K)$. 

\vspace{2mm}

\noindent {\it Step 1: $[\mathcal{D} : L]=n \deg C_{A}(K)$ where $n:= [K : F]$.}

\noindent Since $L$ is a maximal subalgebra of $\mathcal{D}$ and using Theorem \ref{DivAl},
$$[\mathcal{D} : L]^{2}=(\deg \mathcal{D})^{2} = [A : F]=(\deg C_{A}(K))^{2}[K : F]^{2}.$$
Thus, $[\mathcal{D} : L] = n \deg C_{A}(K)$ and this concludes the proof. 

\vspace{2mm}

\noindent {\it Step 2: $[C_{A}(K) : \overline{L}] \leqslant {n \over l} \deg C_{A}(K)$ where $l :=\big(v(\mathcal{D}^{\times}\big) : v(L^{\times}))$.}

\noindent This readily follows from Step 1 and from the fundamental inequality of valuation theory saying that 
$$\big( v(\mathcal{D}^{\times}) : v(L^{\times})\big) [C_{A}(K) : \overline{L}] \leqslant [\mathcal{D} : L].$$

\vspace{2mm}

\noindent {\it Step 3: For the same $l$ as above, $[K : \overline{L} \cap K] \geqslant {n \over l}$.}

\noindent Indeed, the surjective homomorphism $\varepsilon$ induces an isomorphism
$$\varepsilon : \Gamma/ \ker \varepsilon \rightarrow Gal[Z(\overline{\mathcal{D}})/\overline{Z(\mathcal{D})}] = Gal[K/F].$$
Let $x$, $y$ be two elements of $L$ with $v(y)=0$ and $\bar{y}\in \overline{L} \cap K$. If $v(x)= \gamma$, then $x=a_{\gamma}r(1+S)$ with $v(S)>0$ and $r \in C_{A}(K)$. Therefore, 
$$\overline{y}=\overline{x^{-1}yx} = \overline{r}^{-1}\, \overline{a_{\gamma}^{-1}ya_{\gamma}}\, \overline{r}.$$
Using the multiplication rules, we obtain $\bar{y}^{\varepsilon (\gamma)} = \bar{y}$.
This implies that $\varepsilon(v(L^{\times}))$~is contained in $Gal[K/\overline{L} \cap K]$. Since $(v(\mathcal{D}^{\times}) : v(Z(\mathcal{D})^{\times}))=(\Gamma : \ker \varepsilon)=[K : F] =n$, we know that $(v(L^{\times}) : \ker \varepsilon) = {n \over l}$ divides $[K : \overline{L} \cap K]$. Consequently, 
we conclude immediately that ${n \over l} \leqslant [K : \overline{L} \cap K]$.

\vspace{2mm}

\noindent {\it Step 4: $[C_{A}(K) : \overline{L}K] \leqslant \deg C_{A}(K)$.}

\noindent From Step 3, it follows that $[\overline{L}K : \overline{L}] \geqslant {n \over l}$. Hence, since $$[C_{A}(K) : \overline{L}K] = {[C_{A}(K) : \overline{L}] \over [\overline{L}K : \overline{L}]},$$
and using Step 2, we obtain $[C_{A}(K) : \overline{L}K] \leqslant \deg C_{A}(K)$.

\vspace{3mm}

Suppose now that $L/F((\ker \varepsilon))$ is a Galois extension. It follows that $\overline{L}/F$ is a normal extension (see [6, p. 136]). Moreover, $\overline{L}/F$ is separable because ${\rm char}\,F$ does not divide $[\overline{L} : F]$, hence $\overline{L}/F$ is a Galois extension.
 Therefore, since $K/F$ and $\overline{L}/F$ are Galois extensions, $\overline{L}K/F$ is a Galois extension (of degree $\deg A$).
 \end{proof}

\vspace{3mm}

\begin{remark}
For $C_A(K)$ a division algebra, if a field $M$ is a maximal subfield of~$C_{A}(K)$, dimension count shows that $M$ is also a maximal subfield of $A$.  
\end{remark}

\vspace{3mm}

The following result (similar to Theorem 5.20 in [7]) provides a condition for $\mathcal{D}$ to be a noncrossed product.

\vspace{3mm}

\begin{coro}\label{crit}
Let $A$ be a central division $F$-algebra containing a Galois extension~$K/F$ with a abelian Galois group $G:=Gal[K/F]$ and such that ${\rm char}\,F$ does not divide $\deg A$. Suppose $\varepsilon : \Gamma \rightarrow G$ is a surjective homomorphism, where $\Gamma$ is a totally ordered abelian (additive) group.
If $A$ contains no maximal subfield $M$ containing $K$ such that $M/F$ is a Galois extension, then $\mathcal{D}$ is a noncrossed product division algebra.
\end{coro}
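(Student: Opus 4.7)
The plan is to argue by contradiction, using Proposition~\ref{lemme} to transfer a hypothetical Galois maximal subfield of $\mathcal{D}$ back to one of $A$.

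First I would verify that $\mathcal{D}$ is indeed a division algebra, so that the notion of ``noncrossed product'' is meaningful. Since $A$ is a central division $F$-algebra, the centralizer $C_A(K)$ of the subfield $K$ is again a division algebra, so Theorem~\ref{DivAl} applies and gives that $\mathcal{D}$ is a division algebra with centre $Z(\mathcal{D}) = F((\ker\varepsilon))$ and $\deg\mathcal{D}=\deg A$.

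Suppose for contradiction that $\mathcal{D}$ is a crossed product, and let $L\subseteq\mathcal{D}$ be a maximal subfield which is Galois over $Z(\mathcal{D})$. To apply the Galois part of Proposition~\ref{lemme}, I must check that $\mathrm{char}\,F$ does not divide $[\overline{L}:F]$. The first assertion of Proposition~\ref{lemme} already guarantees that $\overline{L}K$ is a maximal subfield of $C_A(K)$, so $[\overline{L}K:F] = [\overline{L}K:K]\cdot[K:F] = \deg C_A(K)\cdot[K:F] = \deg A$. In particular $[\overline{L}:F]$ divides $\deg A$, which by hypothesis is not divisible by $\mathrm{char}\,F$, so the required condition holds.

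Proposition~\ref{lemme} then yields that $\overline{L}K/F$ is Galois, and by the remark following the proposition any maximal subfield of $C_A(K)$ is automatically a maximal subfield of $A$. Thus $\overline{L}K$ is a maximal subfield of $A$ containing $K$ and Galois over $F$, contradicting the standing hypothesis. Hence $\mathcal{D}$ is a noncrossed product division algebra. The only step requiring a small computation is the verification that $\mathrm{char}\,F\nmid[\overline{L}:F]$; otherwise the argument is a direct chain of implications from Proposition~\ref{lemme} together with the remark, and I anticipate no real obstacle.
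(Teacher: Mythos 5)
Your argument is correct and follows essentially the same route as the paper: reduce to the case that $C_A(K)$ is a division algebra, invoke Theorem~\ref{DivAl}, and then apply Proposition~\ref{lemme} together with the remark identifying maximal subfields of $C_A(K)$ with maximal subfields of $A$. Your explicit verification that ${\rm char}\,F \nmid [\overline{L}:F]$ (via $[\overline{L}K:F]=\deg A$) just fills in a step the paper states in one line.
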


\begin{proof}
We have the hypotheses needed to construct the Mal'cev-Neumann ring $\mathcal{D}$. 
Since $A$ is a division algebra, the centralizer $C_{A}(K)$ is clearly a division algebra. Consequently, Theorem \ref{DivAl} says that $\mathcal{D}$ is a division algebra.
If $\mathcal{D}$ is a crossed product, there exists a maximal subfield $L$ of $\mathcal{D}$ such that $L/F((\ker \varepsilon))$ is a Galois extension. The fact that ${\rm char}\,F$ does not divide $\deg A$ implies that ${\rm char}\,F$ does not divide $[\overline{L}:F]$.
By Proposition \ref{lemme}, $M:= \overline{L}K$ is a maximal subfield of $C_{A}(K)$, and thus a maximal subfield of $A$, such that $\overline{L}K$ Galois over~$F$. The  conclusion is clear: by hypothesis, $A$ contains no such subfield. 
\end{proof}
 
\vspace{3mm}

\theoremstyle{plain}
\newtheorem{theo2}{Theorem}[section]

\theoremstyle{definition}
\newtheorem{defi2}[theo2]{Definition}

\theoremstyle{plain}
\newtheorem{prop2}[theo2]{Proposition}

\theoremstyle{plain}
\newtheorem{lemma2}[theo2]{Lemma}

\theoremstyle{plain}
\newtheorem{coro2}[theo2]{Corollary}

\theoremstyle{definition}
\newtheorem{example2}[theo2]{Example}

\theoremstyle{definition}
\newtheorem{remark2}[theo2]{Remark}

\section{Application to noncrossed product division algebras}

\vspace{3mm}

In the following two subsections, we consider two particular cases of central division $F$-algebras $A$ containing a Galois extension $K/F$. We suppose to have the general hypotheses for Corollary \ref{crit} and, in both cases, we discuss now which conditions we must add for $A$ to have no maximal subfield $M$ containing $K$ such that $M/F$ is Galois.

\vspace{3mm}

\subsection{Noncrossed product division algebra of degree $p^{2}$, for $p$ an odd prime}
\mbox{}

\vspace{3mm}

\noindent The following theorem gives some general conditions under which Corollary \ref{crit} applies (for $K/F$ a global fields extension, it is a special case of Lemma 7.6 in [7]). 

\vspace{2mm}

\begin{theo2}\label{theo1}
Let $A$ be a central division $F$-algebra of degree $p^{2}$ containing a Galois extension $K/F$ of degree $p$, where $p$ is an odd prime.
Suppose we have two discrete valuations $v_1$, $v_2$ of $F$ such that: 
\begin{enumerate}
\item $v_1$, $v_2$ extend to valuations of $A$,
\item $K/F$ is totally ramified for $v_1$, ${\rm char}\,\overline{F}_{v_1}\neq p$ and  $\mu_{p^{2}} \not\subset \overline{F}_{v_{1}}$ (i.e. $\overline{F}_{v_{1}}$ contains no primitive $(p^{2})^{th}$-root of unity).
\item $K/F$ is inertial for $v_2$, $\overline{F}_{v_2}$ is finite and $\vert\overline{F}_{v_{2}}\vert \not\equiv0,1\mod p$.
\end{enumerate}
Then, the algebra $A$ has no maximal subfield $M$ containing $K$ such that $M/F$ is a Galois extension. Consequently, $\mathcal{D}$ is a noncrossed product division algebra.
\end{theo2}

\begin{proof}
Suppose we have a maximal subfield $M$ of $A$ containing the field $K$ and such that~$M/F$ is a Galois extension. Since $[M:F]=p^2$, we have two possibilities. 
By hypothesis (1), the valuations $v_1$ and $v_2$ extend uniquely to the field $M$ (see [15]). 

\vspace{2mm}

First case: $M/F$ is a cyclic extension.

Since $M/F$ is cyclic, $K$ is the only subfield of $L$ of dimension $p$. The valuation $v_1$ is totally ramified in $K$, hence the inertia field of $v_1$ for $K/F$ is $F$. Therefore, since the subfields of $M$ are linearly ordered, the inertia field of  $v_1$ for $M/F$ is also the field $F$. Consequently, $v_1$ is totally ramified in $M$.

For the valuation $v_1$, the field extension $M/F$ is tamely ramified: $\overline{M}_{v_1} = \overline{F}_{v_1}$ and $([M:F], {\rm char}\,\overline{F}_{v_1})=1$. Since $M/F$ is a cyclic tamely and totally ramified extension for a discrete valuation, we conclude that $\overline{F}_{v_1}$ contains a primitive $(p^{2})^{\rm th}$-root of unity, using Proposition $(\ast)$ in [9, p. 201] (or Theorem 16.2.6 in [6]). This is a contradiction with the hypothesis (2). Thus, $K$ has no cyclic extension of degree~$p^{2}$ in $A$.

\vspace{2mm}

Second case: $M/F$ is not a cyclic extension, hence $M/F$ is an elementary abelian extension.

In this case, $M$ has a subfield $K'$ with $K \not=K'$ such that $[K' : F]=[K:F]=p$. Since $\overline{F}_{v_2}$ is finite, the valuation $v_2$ is inertial in $K$ implies that it is totally ramified in $K'$. Furthermore, $\overline{K'}_{v_2}=\overline{F}_{v_2}$ and, using $\vert\overline{F}_{v_{2}}\vert \not\equiv0\mod p$, we obtain that $([K' : F], {\rm char}\,\overline{F}_{v_2})=1$. Hence $K'/F$ is tamely ramified. 
The extension $K'/F$ is cyclic, hence $\overline{F}_{v_2}$ contains a primitive $p^{\rm th}$-root of unity (using the same result as in the cyclic case, in [9]), thus $\vert\overline{F}_{v_{2}}\vert \equiv1\mod p$. This contradicts the hypothesis~(3).  

\vspace{2mm}

In conclusion, such a field $M$ does not exist, hence $\mathcal{D}$ is a noncrossed product division algebra.
\end{proof}

\vspace{3mm}

To construct a noncrossed product division algebra using Theorem \ref{theo1}, it remains to have an algebra and a field extension satisfying all the hypotheses we need to use the theorem.
To do this, we assume that $F$ is a global field.
Corollary \ref{invloc} shows that for any Galois extension $K/F$ of odd prime degree $p$ satisfying properties (2) and~(3), there always exists a central division $F$-algebra $A$ of degree~$p^2$ containing~$K$ and satisfying (1). The proof uses local invariants. For more details about this subject, see [13,~\S17.10,~\S18.4,~\S18.5] and [14, \S8.32]. 

\noindent We will write $V(F)$ to designate the set of all primes of $F$ (see [14, p. 63-64]) and ${\rm Inv}_{v}(A)$ for the local invariant of $A$ for the valuation $v$.

\vspace{2mm}

\begin{prop2}\label{contain}
Let $A$ be a central division algebra over a global field $F$ of degree~$n$. An extension $K$ of $F$ of degree $k$ dividing $n$ is contained in $A$ if and only if the Least Common Multiple (LCM) of orders of ${\rm Inv}_{w}(A \otimes_{F} K)$ for $w \in V(K)$ is $n/k$.
\end{prop2}

\begin{proof}
We know that there exists an $F$-algebra embedding of $K$ in $A$ if and only if ${\rm Ind}(A\otimes_{F} K)[K : F]={\rm Ind}\,A$ (for the proof, see Theorem 24 in [1, p. 61]).
Then, the hypotheses implies that $K$ is contained in $A$ if and only if ${\rm Ind}(A \otimes_{F} K)=n/k$. Since ${\rm Ind}(A\otimes_{F} K)$ is the LCM of the local indices (see [14, p. 279]), and since the local indices are equal to the orders of the local invariants ${\rm Inv}_{w}(A\otimes_{F}K)$ (see [13,~\S17.10]), we conclude the proof. 
\end{proof}

\vspace{2mm}

\begin{coro2}\label{invloc}
Let $n$ be a positive integer and let $K/F$ be a global field extension of degree $k$ such that $n=km$ for some positive integer $m$. Suppose $v_1$, $v_2 \in V(F)$ extend uniquely to $K$.
Let $S$ be a finite subset of $V(F)$ containing $v_1$ and $v_2$. Consider a sequence $(t_v)_{v \in V(F)}$ of elements of $\mathbb{Q}/\mathbb{Z}$ such~that:
\begin{enumerate}
\item [{(a)}] $\sum_{v\in V(F)}t_{v}=0+ \mathbb{Z}$,
\item [{(b)}] the order of $t_v$ is $n$ for $v \in \{v_1, v_2\}$,
\item [{(c)}] the order of all nonzero $t_v$ divides $m$,
\item [{(d)}] $t_v=0$ for all $v \in V(F)\backslash S$.
\end{enumerate}
Then there exists a central division $F$-algebra $A$ with local invariants $(t_{v})_{v \in V(F)}$. Moreover $A$ has degree $n$, contains $K$ and the valuations $v_1$ and $v_2$ extend to $A$.
\end{coro2}

\begin{proof}
Since we have $\sum_{v \in V(F)}t_{v}=0+ \mathbb{Z}$, the Hasse-Brauer-Noether Theorem (see [14, p. 277]) shows that there exists a unique central division $F$-algebra $A$ such that ${\rm Inv}_{v}(A)=t_{v}$ for all $v \in V(F)$. 
The LCM of the orders of invariants is~$n$, this implies that this division algebra $A$ has degree $n$. 

For a field $F$, we write~$\hat{F}_{v}$ to designate the completion of $F$ for the valuation $v$.
For $i=1$ and $i=2$, the order of ${\rm Inv}_{v_i}(A)$ is~$n$, hence, $A\otimes \hat{F}_{v_i}$ is a division algebra. Consequently, $v_i$ extend to $A\otimes \hat{F}_{v_i}$ for $i\in \{ 1, 2\}$. By restriction, the valuations $v_{1}, v_{2}$ extend to $A$.

We must now prove that the field~$K$ is contained in $A$. The valuations $v_{1}$ and~$v_{2}$ extend uniquely to $K$, hence, $[\hat{K}_{v_{i}} : \hat{F}_{v_{i}}]= [K : F]$ respectively for $i=1$ and $i=2$.
For all $v \in V(F)$, we know that 
\begin{equation}\label{equation}
{\rm Inv}_{w}(A\otimes_{F}K)=[\hat{K}_{w}:\hat{F}_{v}]\,{\rm Inv}_{v}(A)
\end{equation} 
for any extension $w$ of the valuation $v$ to~$K$ (see [14, p. 354]). Then, the order of ${\rm Inv}_{v_{i}}(A \otimes_{F} K)$ is $m$, for $i \in \{1, 2\}$. Moreover, for all valuations $v \in V(F)$, since the order of ${\rm Inv}_{v}(A)$ divides $m$, the order of ${\rm Inv}_{w}(A\otimes_{F}K)$ divides $m$, using~(\ref{equation}). Therefore, it follows that $A$ contains~$K$ by Proposition \ref{contain}. 
\end{proof}

\vspace{2mm}

In conclusion, for a given global field Galois extension~$K/F$ of prime degree $p$ such that the valuations $v_1$ and $v_2$ of $F$ extend uniquely to $K$, we have proved the existence of an algebra satisfying the properties we need.


\vspace{3mm}

To have an example of noncrossed product algebra with this construction, we must consider a global field Galois extension $K/F$ of prime degree, satisfying conditions (2) and (3) in Theorem \ref{theo1}. An explicit example of degree $3$ is given by Hanke in~[9]. Hanke's example is the Galois extension $\mathbb{Q}(\zeta+\zeta^{-1})/\mathbb{Q}$ where~$\zeta$ is a primitive $7^{\rm th}$-root of unity. In the following construction, we will provide an example of degree $p$ for any odd prime $p$.

\vspace{2mm}

\begin{example2} \label{ex2.4}
Let $p$ be an arbitrary odd prime. By Dirichlet's theorem on primes in arithmetic progression, there exists a prime $q\equiv1+p \mod p^2$. Let $\zeta_q \in \mathbb{C}$ be a primitive $q$-th root of unity. Since $\mathbb{Q}(\zeta_q)$ is a cyclic extension of $\mathbb{Q}$ of degree $q-1\equiv 0 \mod p$, it contains a unique cyclic extension $K$ of $\mathbb{Q}$ of degree $p$. Choose $m \in \mathbb{Z}$ whose residue modulo $q$ generates the multiplicative group $(\mathbb{Z}/q\mathbb{Z})^{\times}$. By Dirichlet's theorem, there exists a prime $r \equiv 2q + m (1-q) \mod pq$.



\begin{prop2}\label{propexo}
The field $K$ of Example \ref{ex2.4} satisfies conditions (2) and (3) of Theorem \ref{theo1} for $F=\mathbb{Q}$, $v_1$ the $q$-adic valuation $v_q$ and $v_2$ the $r$-adic valuation $v_r$.
\end{prop2}

\begin{proof}
Firstly, we know that the extension $\mathbb{Q}(\zeta_q)/\mathbb{Q}$ is totally ramified for the $q$-adic valuation $v_q$. Therefore, the extension $K/\mathbb{Q}$ is totally ramified for $v_q$. The residue field $\mathbb{F}_q$ has characteristic $q\neq p$. Since $q\equiv 1+p \mod p^2$, we have that $q\not\equiv 1 \mod p^2$, thus $\mathbb{F}_q$ contains no primitive $(p^2)^{\rm th}$-root of unity.

Secondly, we consider $K/F$ with the valuation $v_r$. The residue field of $F$ for $v_r$ is~$\mathbb{F}_r$. It is a finite field, and $r \not \equiv 0,1 \mod p$ since $r \equiv 2 \mod p$. It remains to prove that $K/F$ is inertial for $v_r$. Since $m$ is a primitive $(q-1)$-root of unity in $\mathbb{F}_q$ and since $r \equiv m \mod q$, it follows that $r^k \not\equiv 1 \mod q$ for all $k < q-1$. Therefore, no extension of $\mathbb{F}_{r}$ of degree $k<q-1$ contains a primitive $q$-th root of unity. It follows that $v_r$ has a unique extension to $\mathbb{Q}(\zeta_q)$ and the residue field is an extension of $\mathbb{F}_r$ of degree $q-1$. In conclusion, $\mathbb{Q}(\zeta_q)/\mathbb{Q}$ is an inertial extension for the valuation~$v_r$.
\end{proof}

\begin{coro2}
For any odd prime $p$, there exists a noncrossed product division algebra of degree $p^2$ over the Laurent series field in one indeterminate over $\mathbb{Q}$.
\end{coro2}

\begin{proof}
From the prime $p$, we construct the field extension $K/\mathbb{Q}$ in Example \ref{ex2.4}. 
Let $v_1$ be the $q$-adic valuation $v_q$ and $v_2$ be the $r$-adic valuation $v_r$. Consider local invariants defined by 
$$t_{v_{1}}:={p^{2}-1 \over p^{2}} + \mathbb{Z}, \qquad t_{v_{2}}:={1 \over p^{2}}+\mathbb{Z}$$
and $t_{v}:=0+\mathbb{Z}$ for all $v \in V(\mathbb{Q})\backslash \{v_1, v_2\}$. This sequence satisfies Corollary \ref{invloc}, hence there exists a division $\mathbb{Q}$-algebra $A$ of degree $p$ containing $K$ and such that $v_q$ and $v_r$ extend to $A$.
Using Proposition \ref{propexo}, we know that the hypotheses (2) and (3) in Theorem \ref{theo1} are satisfied.
Therefore, for $\Gamma = \mathbb{Z}$ and $\varepsilon : \mathbb{Z} \rightarrow Gal[K/\mathbb{Q}]$ defined by $\varepsilon(z)=\sigma^{z}$ (where $\sigma$ generates $Gal[K/\mathbb{Q}]$), Theorem~\ref{theo1} implies that the twisted Laurent series ring $C_{A}(K)((\mathbb{Z}))$ is a noncrossed product division algebra (see Remark \ref{rem}). 
\end{proof}

\end{example2}

\vspace{3mm}

\subsection{Noncrossed product division algebra of degree $8$}
\mbox{}

\vspace{3mm}

Consider $F$ a field with ${\rm char}\,F\neq 2$, and a field $K:=F(\sqrt{a}, \sqrt{b})$ where $a$,~$b$ are elements in $F^{\times}\backslash F^{\times2}$, independent modulo $F^{\times 2}$. Let $A$ be a central division algebra over its center $F$ such that $\deg A = 8$ and such that $A$ contains $K$.

Let $\sigma$ and $\tau$ be the two automorphisms generating the Galois group of $K/F$, respectively defined by: 
\begin{align*}
\sigma(\sqrt{a})&=-\sqrt{a}, & \sigma(\sqrt{b})&=\sqrt{b},\\
\tau(\sqrt{a})&=\sqrt{a}, &  \tau(\sqrt{b})&=-\sqrt{b}.
\end{align*}
\noindent Define the surjective homomorphism $\varepsilon : \mathbb{Z}\times \mathbb{Z} \rightarrow Gal[K/F]$ by $\varepsilon(r,s)=\sigma^{r}\tau^{s}$. Consider $\mathcal{D}=C_{A}(K)((\Gamma))$ where $\Gamma = \mathbb{Z}\times\mathbb{Z}$.

\vspace{3mm}

Using Theorem \ref{DivAl}, since $A$ is a division algebra, $\mathcal{D}$ is a division algebra too.
By Proposition \ref{crit}, if the central division $A$ contains no maximal subfield~$M$ containing $K$ such that $M/F$ is a Galois extension, then $\mathcal{D}$ is a noncrossed product division algebra.

\vspace{3mm}

We want to find conditions on $K$ and $A$ under which such a Galois extension~$M/F$ of degree $8$ does not exist. 

\vspace{2mm}

The following theorem providing some conditions to obtain a noncrossed product division algebra is a generalization of Lemma 8.2 in [7] and of Theorem 6.2 in [8] (for a general global field $F$ instead of a real number field). An example where these conditions are fulfilled is given in Example \ref{exi} below.

\vspace{2mm}

\begin{theo2}\label{cas2}
Let $F$ be a global field with ${\rm char}\,F \neq 2$. Suppose $A$ is a central division $F$-algebra of degree $8$ containing a biquadratic extension $K:=F(\sqrt{a}, \sqrt{b})$ where $a, b$ are such that the quadratic form $\langle a, b, ab\rangle$ is not isometric to $\langle 1, 1, 1\rangle$. Moreover, assume $v_{1}, v_{2}$ are two nondyadic valuations of $F$ such that:
\begin{enumerate}
\item $v_1, v_2$ extend to valuations of $A$,
\item the inertia fields of $v_1$ and $v_2$ for $K/F$ are two distinct quadratic extensions of $F$,
\item $\vert \overline{F}_{v_1} \vert \not\equiv 1 \mod 4$,
\item $\vert \overline{F}_{v_2} \vert \not\equiv 1 \mod 4$,
\end{enumerate}
then there is no Galois extension $M/F$ of degree $8$ such that $K \subseteq M \subseteq A$. Consequently, the division algebra $\mathcal{D}$ is not a crossed product.
\end{theo2}

\begin{proof}
Assume we have a Galois extension $M/F$ of degree $8$ with $K \subseteq M \subseteq A$. To simplify the notation, we write $G:=Gal[M/F]$.
It follows that $\vert G \vert =8$. We have three possibilities: $G$ is the dihedral group, the quaternion group or an abelian group. 

\vspace{1mm}

The proof that such extension  does not exist when $G$ is the dihedral group or an abelian group is similar to the proof of Theorem 6.2 in [8]. 
The case  where~$G$ is the quaternion group is not possible by Witt's criterion for the embedding of biquadratic extensions into quaternionic extensions: the fact that $\langle a, b, ab \rangle$ and $\langle 1,1, 1\rangle$ are not isometric implies that $Gal[M/F]$ is not a quaternion group (see~[16]).

\vspace{1mm}

\noindent In conclusion, there is no Galois extension $M/F$ of degree $8$ such that $K \subseteq M \subseteq~A$.
\end{proof}

\vspace{3mm}

An explicit example of a noncrossed product division algebra based on the extension $\mathbb{Q}(\sqrt{3}, \sqrt{-7})/\mathbb{Q}$ using iterated twisted Laurent series is given in [7], [8] and can be easily adapted with Mal'cev-Neumann series. In the following example, we consider a biquadratic field extension on a rational function field in one indeterminate. 

\vspace{2mm}

\begin{example2}\label{exi}
Let $p$ be a prime such that $p\equiv 3 \mod 8$. Consider the field extension $K/F$ where $F:=\mathbb{F}_{p}(t)$ (the rational function field in one indeterminate over $\mathbb{F}_{p}$) and $K:=\mathbb{F}_{p}(t)\big(\sqrt{t}, \sqrt{(t+1)(t+2)}\big)$, with the valuations $v_{1}:=v_{t}$ and $v_{2}:=v_{t+1}$ respectively the $t$-adic valuation and the $(t+1)$-adic valuation. 

The valuation $v_{1}$ is totally ramified in $\mathbb{F}_{p}(t)(\sqrt{t})$ and the valuation $v_{2}$ is totally ramified in $\mathbb{F}_{p}(t)\big(\sqrt{(t+1)(t+2)}\big)$. Moreover, we see easily that $\overline{t}$ is not a square in the residue field of $\mathbb{F}_{p}(t)$ for the valuation $v_{t+1}$ and that $\overline{(t+1)(t+2)}$ is not a square in the residue field of $\mathbb{F}_{p}(t)$ for the valuation $v_{t}$ (since $p\equiv 3 \mod 8$). Therefore, the inertia field of $v_1$ for $K/F$ is not equal to the inertia field of $v_2$ for~$K/F$. Consequently, we have the hypothesis (2) in Theorem \ref{cas2}. 

In both cases, for $i \in \{1, 2\}$, $\vert \overline{F}_{v_{i}}\vert = p \equiv 3 \mod 4$, thus the hypotheses (3) and~(4) also hold. 

Furthermore, we want to prove that $\langle t, (t+1)(t+2), t(t+1)(t+2) \rangle \not\simeq \langle 1, 1, 1 \rangle$. To do this, we decompose the quadratic forms as follows:
$$\langle t, (t+1)(t+2), t(t+1)(t+2)\rangle \simeq \langle t \rangle \bot \langle (t+1)(t+2)\rangle \langle 1, t\rangle.$$ 
Consider the valuation $v_{t+2}$ which is ramified in $\mathbb{F}_{p}(t)(\sqrt{(t+1)(t+2)})$. 
Clearly, for this valuation, $\langle \overline{t} \rangle$ is anisotropic. Moreover, $\langle \overline{1}, \overline{t}\rangle$ is anisotropic too since $-\overline{t} \equiv 2$ is not a square in the residue field $\mathbb{F}_{p}$. 
Thus, $\langle t, (t+1)(t+2), t(t+1)(t+2)\rangle$ is anisotropic in $\mathbb{F}_{p}(t)$ (see [12, p. 148]). Since $\langle 1, 1, 1\rangle$ is isotropic in $\mathbb{F}_{p}(t)$ (see [12, p.36]), we have $$\langle t, (t+1)(t+2), t(t+1)(t+2)\rangle \not\simeq\langle 1, 1, 1\rangle.$$ 

\vspace{2mm}

As in \S2.1, we want to prove the existence of an algebra $A$ as in Theorem \ref{cas2} using Corollary \ref{invloc} (with $k=4$ and $n=8$). 
For example, we may choose $$t_{1}:= {3\over 8}+\mathbb{Z} \qquad t_{2}:={5 \over 8}+\mathbb{Z}.$$
Corollary \ref{invloc} implies that there exists a central division $F$-algebra having $t_1$ as local invariant in $v_1$ and $t_2$ as local invariant in $v_2$ and $0$ for all the other local invariants.
Moreover, this central division $F$-algebra $A$ has degree $8$, contains $K$ and $v_1$ and $v_2$ extend to~$A$.

In conclusion, Theorem \ref{cas2} says that $C_{A}(K)((\mathbb{Z} \times \mathbb{Z}))$ is a noncrossed product division algebra. 
\end{example2}

\vspace{5mm}

\noindent {\bf Acknowledgment}

\vspace{2mm}

This paper is based on my masters thesis. I'd like to thank my thesis advisor Jean-Pierre Tignol for his advice and suggestions and the referee for the careful reading and many helpful suggestions to improve the paper. My thanks also go to my family and my friends for their encouragement and support. 

\vspace{10mm}

\noindent {\bf BIBLIOGRAPHY}

\vspace{3mm}

\begin{enumerate}

\item[{[1]}] ALBERT A. A., {\it Structure of Algebras}, (coll: {\it Colloq. Pub.}, 24), Amer. Math. Soc., Providence RI, 1961

\item[{[2]}] AMITSUR S. A., ``On central division algebras", {\it Israel J. Math.}, 12, 1972, p. 408-422

\item[{[3]}] BRUSSEL E. S., ``Noncrossed Products and Nonabelian Crossed Products over $\mathbb{Q}(t)$ and $\mathbb{Q}((t))$", {\it Amer. Jour. Math.}, vol. 117, 1995, p. 377-394

\item[{[4]}] BRUSSEL E. S., ``Noncrossed Products over $k_{\mathfrak{p}}(t)$", {\it Trans. Amer. Math. Soc.}, vol. 353, No 5, 2000, p. 2115-2129

\item[{[5]}] COHN P. M., {\it Skew Fields. Theory of general division rings} (coll: {\it Encyclopedia of mathematics and its applications}, 57), Cambridge University Press, New-York, 1995


\item[{[6]}] EFRAT I., {\it Valuations, Orderings, and Milnor $K$-Theory}, (coll: {\it Mathematical Surveys and Monographs}, vol. 124), American Mathematical Society, Providence, RI, 2006

\item[{[7]}] HANKE T., ``A direct Approach to Noncrossed Product Division Algebras" thesis dissertation, Postdam, 2001

\item[{[8]}] HANKE T., ``An explicit example of a noncrossed product division algebra'', {\it Math. Nachr.}, vol. 251, 2004, p. 51-68

\item[{[9]}] HANKE T., ``A Twisted Laurent Series Ring that is a Noncrossed Product'', {\it Israel Journal of Mathematics}, vol. 150, 2005, p. 199-203


\item[{[10]}] JACOB B., WADSWORTH A. R.,``Division algebras over Henselian Fields", {\it Journal of Algebra}, vol. 128, 1990, p. 126-179 

\item[{[11]}] LAM T. Y., {\it A First Course in Noncommutative Rings} (coll: {\it Graduate Texts in Mathematics}, vol.131), Springer-Verlag, New-York, 1991 

\item[{[12]}] LAM T. Y., {\it Introduction to Quadratic Forms Over Fields} (coll: {\it Graduate Studies in mathematics}, vol. 67), American Mathematical Society, Providence, RI, 2005

\item[{[13]}] PIERCE R. S., {\it Associative Algebras} (coll: {\it Graduate Texts in Mathematics}, vol.~88), Springer-Verlag, New York, 1982

\item[{[14]}] REINER I., {\it Maximal Orders}, Academic Press, London, 1975


\item[{[15]}] WADSWORTH A. R., ``Extending valuations to finite dimensional division algebras'',  {\it Proc. Amer. Math. Soc.}, vol. 98, 1986, p. 20-22

\item[{[16]}] WITT E., ``Konstruktion von galoisschen K\"{o}rpern der Charakteristik $p$ zu vorgegebener Gruppe der Ordnung $p^{f}$", {\it J. reine angew. Math.}, vol. 174, 1936, p. 237-245

\end{enumerate}

\end{document}